\newcommand{\eps}{\varepsilon}
\newcommand{\R}{\mathbb{R}}
\newcommand{\RN}{{\mathbb{R}^N}}
\renewcommand{\le}{\leqslant}
\renewcommand{\ge}{\geqslant}
\renewcommand{\a }{\alpha }
\renewcommand{\b }{\beta }
\renewcommand{\d }{\delta }
\newcommand{\g }{\gamma }
\renewcommand{\l }{\lambda}
\newcommand{\n }{\nabla }
\def\bbm[#1]{\mbox{\boldmath $#1$}}
\newcommand{\beq }{\begin{equation}}
\newcommand{\eeq }{\end{equation}}
\newtheorem{theorem}{Theorem}[section]
\newtheorem{lemma}[theorem]{Lemma}
\newtheorem{remark}[theorem]{Remark}
\renewenvironment{proof}{\noindent{\textbf{Proof\quad}}}{$\hfill\square$\vspace{0.2 cm}\\}
\title{{\bf
Ground state solution for a problem \\with mean curvature operator
in Minkowski space. \footnote{The author is supported by M.I.U.R. -
P.R.I.N. ``Metodi variazionali e topologici nello studio di fenomeni
non lineari''}}}
\author{A. Azzollini \thanks{Dipartimento di Matematica, Informatica ed Economia, Universit\`a degli
Studi della Basilicata,  Via dell'Ateneo Lucano 10, I-85100 Potenza,
Italy, e-mail: {\tt antonio.azzollini@unibas.it}}}
\date{}
\begin{document}
\maketitle

\begin{abstract}
In this paper we prove the existence of a radial ground state
solution for a quasilinear problem involving the mean curvature
operator in Minkowski space.
\end{abstract}

\section*{Introduction}
In this paper we study the following quasilinear problem
    \begin{equation}
\left\{
\begin{array}{ll}
\n \cdot \left[\frac{\n u}{\sqrt{1-|\n u|^2}}\right] + f(u) = 0, &
x\in \RN, \label{eq}
\\
u(x)>0,\quad \hbox{in }\R^N\\ u(x) \to 0 , \quad \hbox{as }|x|\to
\infty,
\end{array}
\right.
\end{equation}
where $N\ge 2$ and $f:\R\to\R$.\\
The differential operator we are considering, known as the mean
curvature operator in the Minkowski space, has been deeply studied
in the recent years, in nonlinear equations on bounded domains with
various type of boundary conditions (see \cite{BJM, BJM2, BJT, BJT2}
and the references within) and in the whole $\R^N$ for
nonlinearities $f$ of
the type $u^p$ (see \cite{BDD}).\\
If we look for radial solutions, we can reduce equation \eqref{eq}
to the following ODE
    \begin{equation}\label{eq:ode}
        \left(\frac{u'}{\sqrt{1-(u')^2}}\right)'+\frac{N-1}{r}\frac{u'}{\sqrt{1-(u')^2}}+f(u)=0
    \end{equation}
where $u\in C^2([0,+\infty])$ is such that $u'(0)=0.$\\
We will use the shooting method to establish the global existence of
the solutions of the Cauchy problem
    \begin{equation}\label{cauchy}
\left\{
\begin{array}{ll}
\left(\frac{u'}{\sqrt{1-(u')^2}}\right)'+\frac{N-1}{r}\frac{u'}{\sqrt{1-(u')^2}}+f(u)=0
\\
u(0)=\xi, u'(0) =0
\end{array}
\right.
\end{equation}
where $\xi$ is allowed to vary in an interval which we will define
later. As usual, in this type of problem the local existence is not
difficult to prove,
since standard fixed point theorems work fine.\\
What is really interesting is to find the conditions which permit to
extend the solution to the whole $\R_+$ and to prove that the
solution is a ground state,
namely $\lim_{r\to\infty}u(r)=0$.\\
The shooting argument has been used in the past to find ground state
solutions to various types of equations. We recall two significant
examples such as
    \begin{equation}\label{BLP}
        \Delta u+f(u)=0,
    \end{equation}
treated in \cite{BLP} or the following prescribed mean curvature
equation
    \begin{equation}\label{PS}
        \n\cdot\left(\frac{\n u}{\sqrt{1+|\n u|^2}}\right)-\l u+u^q
        =0,
    \end{equation}
studied in \cite{PS}. The method consists in studying the profile of
the solution of \eqref{cauchy} as the initial value $\xi$ varies
into an interval. In particular, since we are interested in ground
states, we aim to exclude the cases in which for a finite $R>0$
either $u$ or $u'$ vanishes. Using the property of the intervals to
be connected, if we proved that the values $\xi$ corresponding to
the {\it bad} cases constitute two open disjoint non empty subsets
of an interval $I$, we should have found at least an initial value
whose corresponding
solution is a ground state.\\
We make the following assumptions over $f$
    \begin{enumerate}
      \item[({\bf f1})] $f(0)=0$,
      \item[({\bf f2})] $f$ is locally Lipschitz in $[0,+\infty)$,
      \item[({\bf f3})] $\exists \a:=\inf\{\xi>0\mid f(\xi)\ge 0\}>0$,
      \item[({\bf f4})] (if $N\ge3$) $\lim_{s\to\a^+}\frac{f(s)}{s-\a}>0$,
      \item[({\bf f5})] $\exists \g>0$ such that $F(\g):=\int_0^\g f(s)\,ds >0$,
    \end{enumerate}
      and, defining
        \begin{equation}\label{eq:xiz}
            \xi_0:=\inf\{\xi>0\mid F(\xi)>0\},
        \end{equation}
      we assume
    \begin{enumerate}
      \item[({\bf f6})] $f(\xi)>0$ in $(\a,\xi_0].$
    \end{enumerate}

In the sequel, we will suppose that $f$ is extended in $\R_-$ by
$0$. Of course, since we are looking for positive solutions, this
assumption does not involve the generality of the problem. The main
result of the paper is the following
    \begin{theorem}\label{main}
        If
        \begin{itemize}
            \item $N\ge 3$ and $f$ satisfies
            ({\bf f1}-$\ldots$-{\bf f6})
            \item $N=2$ and $f$ satisfies ({\bf f1}), ({\bf f2}), ({\bf f3}), ({\bf f5}) and
            ({\bf f6}),
        \end{itemize}
        then \eqref{eq} has a radially decreasing solution.
    \end{theorem}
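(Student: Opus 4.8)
The plan is to use the shooting method as outlined in the introduction. The strategy centers on studying the Cauchy problem \eqref{cauchy} with initial datum $\xi$ varying in a suitable interval and classifying the possible asymptotic behaviors of the corresponding solution $u(\cdot,\xi)$.
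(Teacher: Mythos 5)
What you have written is a plan, not a proof: it restates the shooting strategy already announced in the paper's introduction and supplies none of the arguments that make it work. Every substantive step is missing. Concretely, you would need: (i) local existence for \eqref{cauchy}, which the paper obtains by a Banach--Caccioppoli fixed point argument for the operator $T_\xi\circ K\circ N_{(\phi')^{-1}}\circ S\circ N_f$; (ii) the energy identity
\begin{equation*}
H(u'(r))+(N-1)\int_0^r\frac{(u')^2(s)}{s\sqrt{1-(u')^2(s)}}\,ds=F(\xi)-F(u(r)),
\end{equation*}
which keeps $|u'|$ bounded away from $1$ and hence allows global continuation; (iii) a precise choice of the shooting interval $I=(\a,\b)$ and the definition of the two sets $I_+$ (where $u'$ vanishes before $u$) and $I_-$ (where $u$ vanishes before $u'$); (iv) proofs that $I_+\neq\emptyset$ (take $\xi\in(\a,\xi_0)$ so $F(\xi)<0$ and use the energy identity to keep $u$ away from $0$), that $I_-\neq\emptyset$, that the two sets are open (continuous dependence) and disjoint (uniqueness for the Cauchy problem with data $u(R')=u'(R')=0$); and (v) a lemma showing that a globally decreasing positive solution tends to $0$ and not to $\a$, which is exactly where hypothesis ({\bf f4}) enters for $N\ge3$ (via the substitution $v=r^{(N-1)/2}u$) and where a separate logarithmic-divergence argument is needed for $N=2$.

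The step you are most likely to underestimate is (iv) for $I_-$: "classifying asymptotic behaviors" does not by itself produce an initial datum whose solution reaches zero with negative slope. The paper has to leave the ODE setting entirely and run a variational argument on a ball $B_\rho$ --- minimizing a Szulkin-type nonsmooth functional $J$, showing $\inf J<0$ for $\rho$ large by an explicit competitor built from ({\bf f5}), and upgrading the minimizer to a classical solution via an auxiliary semilinear problem in the style of Brezis--Mawhin --- before feeding the resulting initial value $\bar\xi=u_0(0)$ back into the shooting scheme. Without an argument of comparable substance for each of these points, the proposal does not establish the theorem.
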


    \begin{remark}
        We do not treat the case $N=1$ since it is definitely
        analogous to $u''+f(u)=0$. Then we refer to \cite[Section 6]{BL} for
        sufficient and necessary condition for the existence of the
        unique solution of the problem
            \begin{equation*}
                \left\{
                    \begin{array}{ll}
                        \left(\frac{u'}{\sqrt{1-(u')^2}}\right)'+f(u)=0
                        \\
                        u(x)>0,\quad \hbox{in }\R
                        \\ u(x) \to 0 , \quad \hbox{as }|x|\to
                        \infty.
                    \end{array}
                \right.
            \end{equation*}
    \end{remark}

    \begin{remark}
        We exhibit some examples of functions $f$ satisfying our
        assumptions
            \begin{enumerate}
                \item $f(s)=-\l s + s^q$ for $\l>0$ and $q>1$
                is a nice function for
                $N\ge 2,$
                \item $f(x)=-s\sin(s)|\sin(s)|^{q-1}$ is nice
                when $N\ge 2$ and $q=1$ and when $N=2$ and $q\ge 1$.
            \end{enumerate}
    \end{remark}
        \begin{remark}
        By comparing our main result with those in \cite{BLP} and \cite{PS}, some remarkable
        differences stand out. For example we point out that no assumption is required on
        the behaviour of $f$ at infinity. On the contrary, when for instance $f$ is as in example 1,
        a necessary condition both in \cite{BLP} and
        in \cite{PS} is $q\in(1,\frac{N+2}{N-2})$,
        for $N\ge 3$.\\
        Moreover the existence result proved in \cite{PS} holds for $\l$ sufficiently small.
        On the other hand a nonexistence result
        has been proved for \eqref{PS} in \cite{FLS} when
        $\l>\left(2\frac{q+1}{q-1}\right)^{\frac{q-1}{q+1}}.$ As
        shown in example 1, in our case $\l$ is allowed to be any positive number.
    \end{remark}

\section{Proof of the existence result}

Observe that the solution of \eqref{cauchy} satisfies the equation
    \begin{equation}\label{eq:phi}
        (r^{N-1}\phi'(u'))'=-r^{N-1}f(u),
    \end{equation}
where $\phi(s):=1-\sqrt{1-s^2}$ (for $s\in[-1,1]$).\\
It is easy to verify that $\phi':]-1,1[\to\R$ is an increasing
diffeomorphism. Set $\d>0$ (whose smallness will be later
established) and denote by $C:=C(\R_+,\R)$ and by
$C_\d:=C([0,\d],\R)$ respectively the set of the continuous
functions defined in $\R_+$ and in the interval $[0,\d].$ Define the
following operators
    \begin{equation*}
        S:C\to C, \quad S(u)(r):=
            \left\{
                \begin{array}{ll}
                    -\frac 1{r^{N-1}} \int_0^r t^{N-1}
                    u(t)\,dt&\hbox{if } r>0,\\
                    0&\hbox{if } r=0,\\
                \end{array}
            \right .
    \end{equation*}
and
    \begin{equation*}
        K:C\to C, \quad K(u)(r)=\int_0^r u(t)\,dt.
    \end{equation*}

For every $\xi\in \R$, define the translation operator $T_\xi:C\to
C$ such that $T_\xi(u)=\xi+u.$ Moreover, consider the Nemytskii
operators associated to $f$ and $(\phi')^{-1}$,
    \begin{align*}
        &N_f:C\to C,\quad N_f(u)(r)=f(u(r)),
        \\
        &N_{(\phi')^{-1}}:C\to C,\quad
        N_{(\phi')^{-1}}(u)(r)=(\phi')^{-1}(u(r)).
    \end{align*}
Set $\rho >0$ and denote with $B_\rho:=\{u\in C_\d\mid
\|u\|_{\infty}\le\rho\}.$ We set the following fixed point problem:
for any $\xi\in\R$ we want to find $u\in \xi+B_\rho$ such that
    \begin{equation}\label{eq:fixed}
        u=T_\xi\circ K\circ N_{(\phi')^{-1}}\circ S\circ N_f(u).
    \end{equation}
Since $(\phi')^{-1}$ and $f$ are respectively Lipschitz and locally
Lipschitz, Banach-Caccioppoli fixed point theorem guarantees the
existence of a sufficiently small $\d>0$ such that the function
$u:=u(\xi,r)\in \xi+B_\rho$ is a solution of \eqref{eq:fixed}. It is
easy to observe that $u$ is a local solution of the Cauchy problem
\eqref{cauchy}.

    Now, let $R>0$ be such that $[0,R)$ is the maximal interval
    where the function $u$ is defined.
    Multiplying \eqref{eq:ode} by
    $u'$ and integrating over $(0,r)$ we obtain the following
    equality for any $r\in (0,R)$
        \begin{equation}\label{eq:int}
            H(u'(r))+(N-1)\int_0^r\frac{(u')^2(s)}{s\sqrt{1-(u')^2(s)}}\,ds
            = F(\xi)-F(u(r))
        \end{equation}
    where
    $H(t)=\frac{1-\sqrt{1-t^2}}{\sqrt{1-t^2}}.$\\
    Denote by
        \begin{equation}\label{eq:beta}
            \beta:=\inf\{\xi>\xi_0\mid f(\xi)=0\}.
        \end{equation}
    Of course $\a<\xi_0<\beta\le+\infty.$ Denote by $I$ the interval $(\a,\beta)$
    and take $\xi\in I.$ By ({\bf f3}) and ({\bf f6}), for every $s\le\beta$ we
have $F(s)\ge F(\a),$ so from \eqref{eq:int} we  deduce that
$H(u'(r))$ is bounded as far as $u(r)\le \beta$.

Observe that, since $f(u(0))=f(\xi)>0$, from equation \eqref{eq:ode}
we deduce that $u''(0)<0$ and then there exists $\eta>0$ such that
$u'(r)<0$ and $\xi>u(r)>0$ for every $r\in (0,\eta).$ Set
    \begin{equation}\label{eq:barr}
        \bar R:=\left\{
\begin{array}{ll}
\inf\{r\in (0,R)\mid u'(r)\ge 0\}&\hbox{if } u'(r)=0 \hbox{ for some
} r\in (0,R)
\\
+\infty&\hbox{otherwise}.
\end{array}
\right.
    \end{equation}
    \begin{remark}\label{rem}
        According to the definition
        \eqref{eq:barr} we have that $0<\eta\le\bar R\le +\infty$ and, since $u(r)<\xi<\b$ for
        every $r\in (0,\bar R),$ from \eqref{eq:int} we have
            \begin{equation}\label{eq:bound}
                \exists \eps>0 \hbox{ such that, for any } r\in (0,\bar R), |u'(r)|\le 1-\eps.
            \end{equation}
        In particular we deduce that $\bar R=+\infty$ implies $R=+\infty.$
    \end{remark}

Define the following two intervals
    \begin{align*}
        I_+:&=\left\{\xi\in I \Big | \exists R'\le R \hbox{ such that}
            \begin{array}{l}
            u(\xi,r)>0, u'(\xi,r)<0, \hbox{for } r<R'\\
            u'(\xi,R')=0
            \end{array}
        \right\}\\
        \noalign{and}
        I_-:&=\left\{\xi\in I \Big | \exists R'\le R \hbox{ such that}
            \begin{array}{l}
            u(\xi,r)>0, u'(\xi,r)<0, \hbox{for } r<R'\\
            u(\xi,R')=0
            \end{array}
        \right\}.
    \end{align*}
We will prove that $I_+$ and $I_-$ do not cover $I$.
    \begin{lemma}\label{le:lim}
        Suppose $R=+\infty$ and $u'(r)<0,$ $u(r)>0$ for every $r>0$. Then $\lim_{r\to+\infty}
        u(r)=0.$
    \end{lemma}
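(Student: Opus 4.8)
The plan is to first note that, being strictly decreasing on $(0,+\infty)$ and bounded below by $0$, $u$ admits a limit $L:=\lim_{r\to+\infty}u(r)$ with $0\le L<\xi$, so it remains only to prove $L=0$. Since $u'(r)<0$ for every $r$, definition \eqref{eq:barr} forces $\bar R=+\infty$, whence Remark \ref{rem} gives $|u'(r)|\le 1-\eps$ for all $r$. Thus $H(u'(r))\ge 0$ stays bounded and, as the integral in \eqref{eq:int} is nondecreasing while $F(\xi)-F(u(r))$ remains bounded (because $u(r)\in[L,\xi]$), both terms on the left of \eqref{eq:int} are bounded; hence $\int_0^{+\infty}\frac{(u')^2(s)}{s\sqrt{1-(u')^2(s)}}\,ds<+\infty$ and $H(u'(r))$ converges as $r\to+\infty$. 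As $H$ is continuous, even, and strictly increasing in $|t|$ with $H(0)=0$, and $u'<0$, it follows that $u'(r)\to -a$ for some $a\in[0,1)$; were $a>0$, $u$ would eventually decrease at a linear rate and become negative, against $u>0$. Hence $u'(r)\to 0$.

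Next I would integrate \eqref{eq:phi} over $(0,r)$, using $\phi'(u'(0))=0$, to obtain the representation $r^{N-1}\phi'(u'(r))=-\int_0^r t^{N-1}f(u(t))\,dt$. Writing $w:=\phi'(u')$, we have $w<0$, $w$ bounded, and $w(r)\to 0$ by the previous step. If $f(L)\ne 0$, then $f(u(t))$ keeps a constant sign and stays bounded away from $0$ for $t$ large, so the right-hand side above grows in modulus like $r^N$ and $|w(r)|\gtrsim r\to+\infty$, contradicting the boundedness of $w$. Therefore $f(L)=0$; since by ({\bf f1}), ({\bf f3}), ({\bf f6}) and \eqref{eq:beta} the only zeros of $f$ in $[0,\beta)$ are $0$ and $\alpha$, we get $L\in\{0,\alpha\}$, and only $L=\alpha$ must be excluded.

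To rule out $L=\alpha$, note that then $u(r)>\alpha$ for every $r$, so $f(u)>0$ and $r^{N-1}w(r)=-\int_0^r t^{N-1}f(u)\,dt$ is negative and strictly decreasing; fixing any $r_0>0$ gives $r^{N-1}w(r)\le r_0^{N-1}w(r_0)<0$, so $|w(r)|\ge c_0\,r^{-(N-1)}$, and since $w(r)\to 0$ also $|u'(r)|\ge\frac12|w(r)|\ge c\,r^{-(N-1)}$ for $r$ large. For $N=2$ this contradicts $\int_0^{+\infty}|u'(s)|\,ds=\xi-L<+\infty$, so $L=\alpha$ cannot occur. For $N\ge3$ I would invoke ({\bf f4}) in the form $f(\alpha+v)\ge\frac m2 v$ for small $v>0$ (some $m>0$) and bootstrap, writing $v(r):=u(r)-\alpha=\int_r^{+\infty}|u'(s)|\,ds$: from $|u'(r)|\ge a\,r^{-\mu}$ with $\mu>1$ one gets $v(r)\gtrsim r^{-(\mu-1)}$, hence $f(u(r))\gtrsim r^{-(\mu-1)}$, and inserting this into the representation for $w$ improves the bound to $|u'(r)|\gtrsim r^{-(\mu-2)}$. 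Starting from $\mu=N-1$ and iterating lowers the exponent by $2$ at each step, so after finitely many steps one reaches either $|u'|\gtrsim r^{-1}$, contradicting $u'\in L^1$, or $|u'|\gtrsim \mathrm{const}$, contradicting $u'(r)\to 0$. In every case $L=\alpha$ is impossible, whence $L=0$.

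I expect the bootstrap in the case $N\ge 3$ to be the main obstacle: one must verify that the leading term of $\int_{t_0}^r t^{N-1}f(u)\,dt$ really dominates at each stage, so that the positive constants survive the iteration, and that the procedure terminates after finitely many steps — this is precisely where ({\bf f4}) and the restriction $N\ge 3$ enter in an essential way. By contrast, for $N=2$ the conclusion follows at once from the non-integrability of $r^{-(N-1)}=r^{-1}$ together with $u'\in L^1$.
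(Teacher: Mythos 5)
Your proof is correct, and it follows the same skeleton as the paper (existence of the limit $l$, proof that $f(l)=0$ hence $l\in\{0,\a\}$, separate exclusion of $l=\a$ for $N=2$ and $N\ge3$), but the two key sub-arguments are carried out by genuinely different means. First, where you derive $u'(r)\to 0$ from the energy identity \eqref{eq:int} and then kill $f(l)\neq 0$ by integrating \eqref{eq:phi} to get $r^{N-1}\phi'(u')=-\int_0^r t^{N-1}f(u)\,dt$ and observing the right-hand side would grow like $r^N$, the paper instead passes to the limit in \eqref{eq:ode} to get $(\phi'(u'))'\to -f(l)$ and contradicts the bound $|u'|\le 1-\eps$ via a second-derivative estimate; both are sound, and your integral representation has the advantage of also delivering $u'\to 0$ and the initial decay bound $|u'|\gtrsim r^{-(N-1)}$ that you reuse later. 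The $N=2$ case is essentially identical in the two proofs. The real divergence is the case $N\ge3$: the paper follows \cite{BLP}, setting $v=r^{\frac{N-1}{2}}u$ (in fact $v=r^{\frac{N-1}{2}}(u-\a)$ is what makes the quotient $\frac{f(u)}{u-\a}$ from ({\bf f4}) appear in \eqref{eq:est}) and runs a one-shot concavity argument on $v'$, valid uniformly in $N$; you instead bootstrap the pointwise lower bound on $|u'|$, gaining $r^{2}$ per step via ({\bf f4}) and the representation of $w=\phi'(u')$, terminating after roughly $N/2$ steps in a contradiction with either $u'\in L^1(1,\infty)$ or $u'\to 0$ according to the parity of $N$. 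Your route is more elementary (no change of unknown, no Sturm-type comparison) at the cost of a finite iteration whose constants must be tracked — which is harmless here since the number of steps depends only on $N$ — while the paper's argument is shorter once the substitution is accepted. Both uses of ({\bf f4}) and of the restriction to $N\ge3$ occur at the analogous point, so the hypotheses are consumed in the same way.
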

    \begin{proof}
        Of course by monotonicity there exists
        $l=\lim_{r\to+\infty}u(r)\ge 0.$ By \eqref{eq:ode} and
        \eqref{eq:bound}, we deduce that
            \begin{equation}\label{eq:exlim}
                \lim_{r\to+\infty}\left(\frac{u'(r)}{\sqrt{1-(u'(r))^2}}\right)'=-f(l).
            \end{equation}
            Suppose that $f(l)\neq 0,$ say $f(l)>0.$ By simple computations, from \eqref{eq:bound} and \eqref{eq:exlim}
            we deduce that, definitively, $u''(r)<-\d<0,$ for some $\d>0.$
            Of course this is not possible because of \eqref{eq:bound}.\\
            Since $f(l)=0,$ there are only two possibilities, either $l=0$ or
            $l=\a.$

            Suppose $N=2$ and, by contradiction, $l=\a.$ Since for any $r>0$
            $\b>u(r)>\a,$ from \eqref{eq:phi} we deduce that
            $r\phi'(u'(r))$ is decreasing in $\R_+$ and then, in particular, there
            exists $R_0>0$ and $\d>0$ such that for any $r>R_0$ we
            have $\phi'(u'(r))<-\frac \d r.$ By \eqref{eq:bound} we
            infer that, for some $M>0$, we have $M u'(r)\le
            \phi'(u'(r))$ and then
                \begin{equation*}
                    u'(r)\le-\frac \d {Mr}\quad\hbox{for any }r>R_0.
                \end{equation*}
            Integrating in $(R_0,r)$ we obtain
                \begin{equation*}
                    u(r)\le u(R_0)- \frac \d M \log \left(\frac
                    {r}{R_0}\right)
                \end{equation*}
            which contradicts $l=\a.$

            Suppose $N\ge 3.$ To prove $l\neq \a$, assume by contradiction that $l=\a$. Computing in
            \eqref{eq:ode}, we have that the following equality
            holds in $(0,+\infty)$:
            \begin{equation*}
                \frac{u''}{[1-(u')^2]^{\frac 32}}=-\frac{N-1}{r}\frac{u'}{\sqrt{1-(u')^2}}-
                    f(u).
            \end{equation*}
            Taking into account \eqref{eq:bound}, there exists $\d>0$ such that
            $\d\le \sqrt{1-(u')^2}\le 1$. We deduce that
            \begin{equation}\label{eq:secdev}
                u''=-\frac{N-1}{r}u'[1-(u')^2]-  f(u)[1-(u')^2]^{\frac 32}\le-\frac{N-1}{r}u'- \d^3 f(u)
            \end{equation}
            where we have used the fact that $u'<0$ and $f(u)>0.$ Now we proceed as in \cite{BLP},
            repeating the arguments for completeness. If we
            set
            $v=r^{\frac
            {N-1}2}u$, by \eqref{eq:secdev} we get the following estimate
                \begin{equation}\label{eq:est}
                    v''\le
                    \left[\frac{(N-1)(N-3)}{r^2}-\d^3\frac{f(u)}{u}\right]v
                \end{equation}
            from which, in view of assumption ({\bf f4}), we deduce that $v''$ is definitively
            negative. Now, since $v'$ is definitively decreasing,
            certainly there exists
            $L=\lim_{r\to+\infty}v'(r)<+\infty.$\\
            Of course $L$ can
            not be negative, since otherwise
            $\lim_{r\to+\infty}v(r)=-\infty$.\\
            On the other hand, if $L\ge0,$ then we
            deduce that $v$ is definitively increasing and then there
            exists $R_0>0$ such that for any $r>R_0$ we have $v(r)>
            v(R_0).$ From \eqref{eq:est} we infer that, for some positive constant $C$, $v''(r)\le
            -C<0$ definitively and this implies
            $L=\lim_{r\to+\infty}v'(r)=-\infty$: again a
            contradiction.
    \end{proof}

    \begin{theorem}\label{th:nonemp1}
        $I_+$ is not empty.
    \end{theorem}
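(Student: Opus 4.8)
The plan is to show that every $\xi$ slightly larger than $\alpha$ already belongs to $I_+$, exploiting the sign of the energy $F(\xi)$ together with the first integral \eqref{eq:int}. First I would choose $\xi\in(\alpha,\xi_0)$ with $F(\xi)<0$: such values exist because $f<0$ on $(0,\alpha)$ forces $F(\alpha)<0$, while $F$ is increasing on $(\alpha,\xi_0)$ by \textbf{(f6)} and $F(\xi_0)\le 0$ by the definition \eqref{eq:xiz} of $\xi_0$, so in fact $F(\xi)<0$ for every $\xi\in(\alpha,\xi_0)$. Fix such a $\xi$ and let $u=u(\xi,\cdot)$ be the corresponding solution.

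The core of the argument is a barrier coming from \eqref{eq:int}. On $(0,\bar R)$ one has $u'<0$, and since $H\ge 0$ and the integral term in \eqref{eq:int} is nonnegative, the identity yields $F(u(r))\le F(\xi)<0$ for all $r\in(0,\bar R)$. Next I would analyse $F$ near the origin: since $F(0)=0$ and $F$ decreases on $(0,\alpha)$ down to $F(\alpha)<F(\xi)$, there is a unique $\xi^*\in(0,\alpha)$ with $F(\xi^*)=F(\xi)$ and $F(s)>F(\xi)$ for every $s\in[0,\xi^*)$. Consequently $u(r)\ge\xi^*>0$ throughout $(0,\bar R)$, for otherwise $F(u(r))>F(\xi)$ would contradict the inequality just obtained. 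In particular $u$ stays strictly positive, which already excludes the possibility $\xi\in I_-$.

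It remains to show that $u'$ actually vanishes at a finite radius, i.e. $\bar R<+\infty$. I would argue by contradiction: if $\bar R=+\infty$, then by Remark \ref{rem} also $R=+\infty$, and on $(0,+\infty)$ we have $u'<0$ together with $u\ge\xi^*>0$. These are exactly the hypotheses of Lemma \ref{le:lim}, which then forces $\lim_{r\to+\infty}u(r)=0$, contradicting $u\ge\xi^*>0$. Hence $\bar R<+\infty$ and $u'(\bar R)=0$, while $u>0$ on $[0,\bar R)$; this is precisely the condition defining $I_+$, with $R'=\bar R$, so $\xi\in I_+$ and $I_+\neq\emptyset$.

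I expect the only delicate points to be the bookkeeping ensuring that \eqref{eq:int} and the uniform bound \eqref{eq:bound} remain valid on the entire interval $(0,\bar R)$ — which is guaranteed by Remark \ref{rem} since $u(r)<\xi<\beta$ there — and the clean identification of the barrier value $\xi^*$ via the monotonicity behaviour of $F$; everything else follows formally from these facts and the already-established Lemma \ref{le:lim}.
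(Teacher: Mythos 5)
Your argument is correct and is essentially the paper's own proof: both take $\xi\in(\a,\xi_0)$, use the identity \eqref{eq:int} to get $F(u(r))\le F(\xi)<0$ and hence a positive lower barrier for $u$ (your $\xi^*$ is the paper's $m$), and then invoke Lemma \ref{le:lim} to rule out $u'<0$ for all $r$. The only difference is that you spell out more explicitly why $F(\xi)<0$ and how the barrier is produced, which the paper compresses into a reference to ({\bf f6}).
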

    \begin{proof}
        Let $\xi\in(\a,\xi_0)$. By \eqref{eq:xiz}, $F(\xi)<0.$ By \eqref{eq:int} we deduce that
        $F(u(r))< F(\xi)<0$ for any $r\in (0,R).$ As a consequence, by ({\bf f6}) we have that there exists $m >0$ such that
            \begin{equation}\label{eq:pos}
                0<m < u(r)< \xi,
            \end{equation}
        and then, by Remark \ref{rem}, $R=+\infty.$ Now, assuming that $u'(r)<0$ for any $r>0$,
        by Lemma \ref{le:lim} we get a contradiction with
        \eqref{eq:pos}.
    \end{proof}

    Now, to prove that $I_-$ is not empty, we need some preliminary
    results. Consider the problem
            \begin{equation}
\left\{
\begin{array}{ll}
\n \cdot \left[\frac{\n u}{\sqrt{1-|\n u|^2}}\right] + f(u) = 0, &
\hbox{in } B_\rho, \label{eqb}
\\
u = 0 , & \hbox{on } \partial B_\rho.
\end{array}
\right.
\end{equation}
If $\b<+\infty$ (we recall that $\beta$ is defined in
\eqref{eq:beta}), we replace $f$ in \eqref{eqb} by
    \begin{equation}\label{eq:repl}
        \tilde f(s)=\left\{
            \begin{array}{ll}
                f(s)&
                \hbox{if }s\le\b,\\
                0&\hbox{if }s>\b.
            \end{array}
        \right.
    \end{equation}
As in \cite{BJT}, we use a variational approach to \eqref{eqb}.

Set $W_\rho:= W^{1,\infty}((0,\rho),\R).$ It is well known that
$W_\rho\hookrightarrow C_\rho.$

Define
    \begin{equation*}
        K_0:=\{u\in W_\rho\mid \|u'\|_\infty\le 1, u(\rho)=0\}
    \end{equation*}
and
    \begin{equation*}
        \Psi(u):=\left\{
            \begin{array}{ll}
                \displaystyle\int_0^\rho r^{N-1}(1-\sqrt{1-(u')^2})\,dr&
                \hbox{if }u\in K_0\\
                +\infty &\hbox{if }u\in W_\rho\setminus K_0.
            \end{array}
        \right.
    \end{equation*}
For any $u\in W_\rho$ we set
    \begin{equation*}
        J(u):=\Psi(u)-\int_0^\rho r^{N-1}
        F(u)\,dr.
    \end{equation*}
It is easy to verify that the functional $J$ is a Szulkin's
functional (see \cite{Sz}) so that, by \cite[Proposition 1.1]{Sz},
we have that if $u\in W_\rho$ is a local minimum of $J$, then it is
a Szulkin critical point and  for any $v\in K_0$ it solves the
inequality

    \begin{equation}\label{eq:Sz}
        \int_0^\rho r^{N-1}(\phi(v')-\phi(u'))\,dr-\int_0^\rho r^{N-1}
        f(u)(v-u)\,dr\ge 0
    \end{equation}
where we recall that $\phi$ is defined in \eqref{eq:phi}.
    \begin{lemma}\label{le:min}
        If $u_0\in K_0$ is a local minimum for $J$, then $u_0(|x|)$ is a classical solution of
        \eqref{eqb}.
    \end{lemma}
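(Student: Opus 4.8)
The plan is to bootstrap the minimizer $u_0$ from the mere Lipschitz regularity built into $K_0$ up to a $C^2$ function solving the radial ODE \eqref{eq:ode}, which for a radial profile is exactly what it means for $u_0(|x|)$ to be a classical solution of \eqref{eqb}. The whole argument is powered by the variational inequality \eqref{eq:Sz} together with the distinctive feature of the Minkowski operator that $\phi'$ is a diffeomorphism of $(-1,1)$ onto all of $\R$, so that its inverse $(\phi')^{-1}(\tau)=\tau/\sqrt{1+\tau^2}$ takes values in $(-1,1)$ automatically. The upshot is that once the flux equation is established, the gradient bound $|u_0'|<1$ costs nothing.

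First I would extract a one-sided Euler--Lagrange inequality. For any $w\in K_0$ the convex combination $v_t:=(1-t)u_0+tw$ lies in $K_0$ for $t\in(0,1]$, hence is admissible in \eqref{eq:Sz}; dividing by $t$ gives $\tfrac1t\int_0^\rho r^{N-1}\big(\phi(v_t')-\phi(u_0')\big)\,dr\ge\int_0^\rho r^{N-1}f(u_0)(w-u_0)\,dr$. Since $\phi$ is convex the difference quotient is nonincreasing as $t\downarrow0$, so by monotone convergence the left-hand side decreases to the directional derivative and I obtain
\[
\int_0^\rho r^{N-1}\phi'(u_0')(w'-u_0')\,dr\;\ge\;\int_0^\rho r^{N-1}f(u_0)(w-u_0)\,dr\qquad\forall\,w\in K_0 .
\]
On the relatively open set $U:=\{r\in(0,\rho):|u_0'(r)|<1\}$ the perturbations $u_0\pm t\psi$ with $\psi\in C_c^\infty(U)$ stay in $K_0$ for small $t$, and there $\phi'(u_0')$ is bounded on $\supp\psi$; testing with both signs therefore upgrades the inequality to the weak equation $\big(r^{N-1}\phi'(u_0')\big)'=-r^{N-1}f(u_0)$ on $U$, which integrates to $r^{N-1}\phi'(u_0'(r))=-m(r)$ with $m(r):=\int_0^r s^{N-1}f(u_0(s))\,ds$.

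The genuinely delicate point — and the step I expect to be the main obstacle — is the saturation set $A:=\{r\in(0,\rho):|u_0'(r)|=1\}$, where $\phi'(u_0')$ blows up, so neither the differentiation under the integral nor the two-sided test is directly licit. I would handle it by showing a posteriori that $A$ is empty in the interior: on $(0,\rho)\setminus A$ the identity $r^{N-1}\phi'(u_0'(r))=-m(r)$ holds, and inverting with $(\phi')^{-1}$ yields the closed formula
\[
u_0'(r)=(\phi')^{-1}\!\Big(-\frac{m(r)}{r^{N-1}}\Big)=-\frac{m(r)}{\sqrt{\,r^{2(N-1)}+m(r)^2\,}} ,
\]
whose right-hand side always has modulus strictly below $1$. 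Hence the constraint cannot be active at any interior point, $A$ does not meet $(0,\rho)$, and the first variation used above was legitimate throughout. (When $\beta<+\infty$ the nonlinearity is the truncation \eqref{eq:repl}, which is still continuous because $f(\beta)=0$, so this causes no trouble.)

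Finally I would bootstrap. Since $u_0$ is continuous and $f(u_0)$ is continuous by ({\bf f2}), the function $m$ is $C^1$ on $(0,\rho)$ with $m(r)=O(r^N)$ near the origin; the closed formula then shows $u_0'\in C^1((0,\rho))$, $u_0'(0)=0$, and $|u_0'|<1$ everywhere. Differentiating the flux relation $r^{N-1}\phi'(u_0')=-m$ gives $u_0\in C^2$ satisfying precisely \eqref{eq:ode}, and together with the boundary value $u_0(\rho)=0$ inherited from $K_0$ this means that $u_0(|x|)$ is a classical solution of \eqref{eqb}. Everything after the closed formula for $u_0'$ is routine; the only step requiring care is the identification of the flux across the a priori unknown set $A$.
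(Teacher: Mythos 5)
Your overall strategy (direct first variation of the Szulkin inequality, flux identity, then exploit that $(\phi')^{-1}$ maps all of $\R$ into $(-1,1)$ to kill the gradient constraint) is a reasonable plan, and you correctly locate the crux at the saturation set. But the execution has a genuine gap precisely there, in two places. First, the set $U=\{r:|u_0'(r)|<1\}$ is not ``relatively open'': $u_0'$ is only an $L^\infty$ function, defined a.e., so $U$ is merely measurable, and even where $|u_0'|<1$ pointwise a.e.\ there is no uniform gap to the constraint. Consequently the two-sided perturbations $u_0\pm t\psi$ with $\psi\in C_c^\infty(U)$ need \emph{not} lie in $K_0$ for any $t>0$ (you would need $|u_0'|\le 1-t\|\psi'\|_\infty$ on $\supp\psi$), so the upgrade from the one-sided inequality to the weak equation $(r^{N-1}\phi'(u_0'))'=-r^{N-1}f(u_0)$ on $U$ is not licit. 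This is the standard obstacle-problem difficulty, and it is exactly why one cannot take the naive first variation of $J$. Second, the exclusion of the saturation set $A$ is circular as written: the closed formula $u_0'=(\phi')^{-1}(-m(r)/r^{N-1})$ is only derived on $(0,\rho)\setminus A$ (and even there only up to an undetermined constant of integration on each connected component, since nothing a priori identifies the flux constant as $0$ unless the component reaches $r=0$), so observing that this formula has modulus $<1$ tells you nothing about what happens \emph{on} $A$; it does not show $A=\emptyset$.

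The paper avoids both problems by a comparison trick taken from Brezis--Mawhin: it introduces the auxiliary semilinear problem $(r^{N-1}\phi'(v'))'-r^{N-1}v=-r^{N-1}(f(u_0)+u_0)$ with $v'(0)=0$, $v(\rho)=0$, which is known (by \cite{BJM}) to possess a unique \emph{classical} solution $\bar v$ with $|\bar v'|<1$, itself characterized by a variational inequality of the same form as \eqref{eq:Sz}. Testing \eqref{eq:Sz} with $v=\bar v$ and the inequality for $\bar v$ with $w=u_0$ and adding, the $\phi$-terms cancel and one gets $-\int_0^\rho r^{N-1}(u_0-\bar v)^2\,dr\ge 0$, hence $u_0=\bar v$. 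All the regularity and the strict gradient bound are thus inherited from the auxiliary problem rather than extracted from the constrained minimizer directly. If you want to keep your direct route, the repair would be to define $v$ by your closed formula (which does lie in $K_0$ with $|v'|<1$) and prove $J(v)\le J(u_0)$ with equality only if $u_0=v$ --- but that is essentially the same comparison argument in disguise, and you would still need to justify the flux identity defining $v$ without having first varied $J$.
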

    \begin{proof}
        We will use an argument taken from \cite{BM}.

        Suppose $u_0\in K_0$ is a minimum for $J$ and consider the problem
            \begin{equation}\label{eq:aux}
                (r^{N-1}\phi'(v'))'-r^{N-1}v= -r^{N-1}(f(u_0)+u_0),\quad
                v'(0)=0, v(\rho)=0.
            \end{equation}
        By \cite[Theorem 2.1]{BJM}, certainly \eqref{eq:aux} has a
        classical
        solution. As in \cite[Lemma 3, Lemma 4]{BM} we deduce that the solution is
        unique, call it $\bar v$, and for any $w\in K_0$ it satisfies the following inequality
            \begin{equation}\label{eq:anineq}
                        \int_0^\rho r^{N-1}(\phi(w')-\phi(\bar v'))\,dr+\int_0^\rho r^{N-1}
        (\bar v-f(u_0)-u_0)(w-\bar v)\,dr\ge 0.
            \end{equation}
        Now write \eqref{eq:Sz} for $v=\bar v$ and \eqref{eq:anineq}
        for $w=u_0$ and sum up the two inequalities. What we obtain is
            \begin{equation*}
                -\int_0^{\rho}r^{N-1}(u_0-\bar v)^2\,dr\ge0
            \end{equation*}
        which implies $u_0=\bar v$ and then $u_0$ is the unique classical solution of
        \eqref{eq:aux}. We conclude that $u_0(|x|)$ is a classical solution of
        \eqref{eqb}.
    \end{proof}

    \begin{theorem}\label{th:nonemp2}
        $I_-$ is not empty.
    \end{theorem}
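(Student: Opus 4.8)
The plan is to produce, for a suitably large radius $\rho$, a nontrivial nonnegative minimizer $u_0$ of $J$ on $K_0$, to invoke Lemma \ref{le:min} to turn it into a classical radial solution of \eqref{eqb}, and then to show that its central value $\xi^\ast:=u_0(0)$ lies in $I$ and that the corresponding Cauchy solution decreases monotonically to $0$, so that $\xi^\ast\in I_-$. For the existence of the minimizer I would run the direct method: $K_0$ is convex and, by the bound $\|u'\|_\infty\le 1$ together with $u(\rho)=0$, equibounded and equicontinuous, hence compact in $C_\rho$ by Ascoli--Arzel\`a. The functional $J$ is lower semicontinuous there, since $\int_0^\rho r^{N-1}F(u)\,dr$ is continuous under uniform convergence while $\Psi$, being the integral of the convex integrand $1-\sqrt{1-(u')^2}$, is lower semicontinuous along the weak-$\ast$ $L^\infty$ limits of the derivatives. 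Thus $J$ attains its infimum at some $u_0$, and replacing $u_0$ by $|u_0|$ does not increase $J$ (it leaves $\Psi$ unchanged and cannot decrease $\int r^{N-1}F$, as $F\equiv 0$ on $\R_-$), so I may take $u_0\ge 0$.

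To make the minimizer nontrivial I would exploit ({\bf f5}): choose $\gamma$ with $F(\gamma)>0$ and test $J$ against the function equal to $\gamma$ on $[0,\rho-\gamma]$ and dropping with slope $-1$ to $0$ on $[\rho-\gamma,\rho]$. Its $\Psi$-value and the boundary-layer part of $\int r^{N-1}F$ are $O(\rho^{N-1})$, whereas the bulk contributes $-F(\gamma)(\rho-\gamma)^N/N$; hence for $\rho$ large enough $J(u_0)<0$. Since $\Psi\ge 0$, this forces $\int_0^\rho r^{N-1}F(u_0)\,dr>0$, so $u_0$ must exceed $\xi_0$ somewhere and $M:=\max u_0>\xi_0>\alpha$. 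I fix such a $\rho$ once and for all.

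The heart of the argument is to show that $u_0$ is strictly radially decreasing with $u_0(0)=M\in I$. Because $u_0$ is a classical radial solution with $u_0'(0)=0$, it coincides with the Cauchy solution of \eqref{cauchy} for $\xi=u_0(0)$, so \eqref{eq:int} applies and shows that $E(r):=H(u_0'(r))+F(u_0(r))$ is nonincreasing. If $\beta<+\infty$ I first rule out $M>\beta$ (on $\{u_0>\beta\}$ the replaced nonlinearity \eqref{eq:repl} vanishes, forcing $r^{N-1}\phi'(u_0')\equiv 0$, hence $u_0$ constant, contradicting $u_0(\rho)=0$) and $M=\beta$ (then $u_0$ and the constant $\beta$ solve the same Cauchy problem, so $u_0\equiv\beta$, again impossible), so that $M\in(\xi_0,\beta)$. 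Evaluating $E$ at $r=0$ and at a maximum point, and using that $F$ is strictly increasing on $[\alpha,\beta)$, gives $u_0(0)=M$; in particular $f(u_0(0))>0$, so $u_0$ starts out decreasing. Finally, a first interior critical point $\bar R$ would be a local minimum with value $m>0$; comparing $E(\bar R)=F(m)$ with $E(\rho)\ge 0$ yields $F(m)\ge 0$, hence $m\ge\xi_0>\alpha$, but then $u_0>\alpha$ on $[0,\bar R]$ makes $r^{N-1}\phi'(u_0')$ strictly decreasing from $0$ and so $u_0'(\bar R)<0$, a contradiction (the degenerate alternative $m=0$ is excluded by uniqueness, which would give $u_0\equiv 0$). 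Therefore $u_0'<0$ on $(0,\rho)$, $u_0(\rho)=0$, and $\xi^\ast=u_0(0)\in(\alpha,\beta)=I$, which is precisely the condition $\xi^\ast\in I_-$.

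The main obstacle is thus the monotonicity and boundary-value analysis of the third paragraph: everything hinges on combining the monotonicity of the energy $H(u')+F(u)$ coming from \eqref{eq:int} with the sign information on $f$ encoded in ({\bf f3}), ({\bf f6}) and in the definitions \eqref{eq:xiz} and \eqref{eq:beta} of $\xi_0$ and $\beta$.
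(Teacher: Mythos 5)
Your overall strategy is the paper's own: minimize the Szulkin functional $J$ over $K_0$ by the direct method, show $\inf J<0$ for $\rho$ large by testing with a plateau-plus-linear-decay function, convert the minimizer into a classical radial solution via Lemma \ref{le:min}, and then use the energy identity \eqref{eq:int} (equivalently, the monotonicity of $H(u')+F(u)$) to exclude a vanishing of $u_0'$ before the first zero of $u_0$, so that $\xi^*=u_0(0)\in I_-$. Your final contradiction runs slightly differently --- you deduce $F(m)\ge 0$, hence $m\ge\xi_0>\alpha$, hence $f(u_0)>0$ on $[0,\bar R]$, so $r^{N-1}\phi'(u_0')$ is strictly decreasing from $0$ and $u_0'(\bar R)<0$; the paper instead deduces $F(u(\bar R))>0$ while $u''(\bar R)\ge 0$ forces $f(u(\bar R))\le 0$, hence $u(\bar R)\le\alpha$ and $F(u(\bar R))<0$ --- but both rest on the same subtraction of \eqref{eq:int} evaluated at $\bar R$ and at $\rho$, and both are correct. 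Your extra step locating the maximum of $u_0$ at $r=0$ (via the strict monotonicity of $F$ on $[\alpha,\beta)$) is sound, though the paper never needs it.

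The one step that fails is the symmetrization ``replacing $u_0$ by $|u_0|$ does not increase $J$''. On the set where $-\xi_0<u_0<0$ one has $F(u_0)=0$ (because $f\equiv 0$ on $\R_-$) but $F(|u_0|)<0$ (because $f<0$ on $(0,\alpha)$ by ({\bf f3}) and $F<0$ on $(0,\xi_0)$ by the definition \eqref{eq:xiz}), so $-\int_0^\rho r^{N-1}F(|u_0|)\,dr$ can strictly exceed $-\int_0^\rho r^{N-1}F(u_0)\,dr$: the inequality you assert goes the wrong way, and $J(|u_0|)>J(u_0)$ is possible. Fortunately the step is dispensable: your third paragraph never actually requires $u_0\ge 0$ as an input (if the value $m$ at a putative first critical point were negative, $u_0$ would already have crossed zero with negative derivative and $\xi^*\in I_-$ would follow at once), and the paper sidesteps the issue by simply restricting $u_0$ to $[0,R')$ with $R'$ its first zero. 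Delete the $|u_0|$ sentence and the argument stands.
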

    \begin{proof}
        As a first step, we show that
            \begin{itemize}
                \item[1.] $J$ is bounded below and achieves its
                infimum,
                \item[2.] if $\rho>0$ is sufficiently large, then $c_0=\inf_{u\in W_\rho} J(u)<0$.
            \end{itemize}
        Observe that
            \begin{equation*}
                \forall u\in K_0:\quad \|u\|_{\infty}\le \rho.
            \end{equation*}
        As a consequence, it is easy to see that $J$ is bounded below.
        Consider $(u_n)_n\in W_\rho$ a minimizing sequence. Of course we can
        assume $u_n\in K_0$ for any $n\ge 1.$
        By Ascoli Arzel\`a theorem, there exists a subsequence, relabeled $(u_n)_n$, and a continuous function $u_0$ such that
            \begin{equation}\label{eq:convunif}
                u_n\to u_0\quad\hbox{uniformly in } [0,\rho].
            \end{equation}
        To prove that $u_0$ is in $K_0,$ we just observe that, for any $x,y\in [0,\rho]$, with $x\neq y$,  we have
            \begin{equation*}
                \lim_n\frac{u_n(x)-u_n(y)}{x-y}=\frac{u_0(x)-u_0(y)}{x-y},
            \end{equation*}
        and then also $u_0$ has Lipschitz constant 1.\\
        By \eqref{eq:convunif} and \cite[Lemma 1]{BM}, $\Psi(u_0)\le\liminf_n\Psi(u_n).$ Then, again by \eqref{eq:convunif}, we have
            \begin{equation*}
                J(u_0)\le c_0.
            \end{equation*}
        Now we prove our second claim. Consider the following
        function defined for $\rho>2\g$
            \begin{equation*}
                    w_\rho(r)=\left\{
            \begin{array}{ll}
                \g&
                \hbox{in }[0,\rho-2\g]\\
                \frac{\rho-r}2 &\hbox{in }[\rho-2\g,\rho].
            \end{array}
        \right.
    \end{equation*}
    Of course $w_\rho\in K_0$. Moreover
        \begin{align*}
            J(w_\rho) &\le \frac 12 \int_{\rho-2\g}^\rho(2-\sqrt 3) s^{N-1}\, ds\\
            &\qquad-F(\g) \frac {(\rho-2\g)^N}{N}+\frac 1N \max_{0\le s\le \g} |F(s)|(\rho^N-(\rho-2\g)^N)\\
            &\le C_1(\rho^N-(\rho-2\g)^N)-\frac {F(\g)}{N} (\rho-2\g)^N\\
            &\le C_2\rho^{N-1}-C_3\rho^N
        \end{align*}
    where $C_1,C_2$ and $C_3$ are suitable positive constant. The second claim is an obvious consequence of the previous chain of inequalities.

    Now, suppose $\rho_0>0$ and $u_0\in K_0$ are such that $I(u_0)=c_0<0$ and set $\bar\xi=u_0(0).$
    The value $\bar\xi\in (\a,\b).$ Indeed, by Lemma \ref{le:min}, $u_0(|\cdot|)$ is a classical
    solution of \eqref{eqb} and then $u_0$ is a local solution of \eqref{cauchy}, with $\xi=\bar\xi$
    and $\tilde f$ instead of $f$ if $\beta<+\infty.$ If $\bar\xi\le\a,$ then $F(\bar\xi)\le 0$ leads to an obvious
    contradiction to \eqref{eq:int} computed in $r=\rho_0.$ On the other hand, $\bar\xi$ can not
    be greater than $\b$, since in this case, by \eqref{eq:repl}, the unique solution of the Cauchy
    problem \eqref{cauchy} would be the constant function $u(r)=\bar\xi.$\\
    By contradiction, suppose that $\bar\xi\notin I_-.$ Since we can assume $u_0(r)>0$ in $[0,\rho_0)$,
    otherwise we consider the function $u_0$ restricted to the interval $[0,R')$ where $R':=\inf\{r>0\mid u_0(r)=0\}$,
    our contradiction assumption implies that $\bar R\in (0,\rho_0)$ (the definition of $\bar R$ is given in
    \eqref{eq:barr}).\\
    Computing \eqref{eq:int} for $r=\bar R$ and for $r=\rho_0$, we respectively have
        \begin{align}
            &(N-1)\int_0^{\bar R}\frac{(u')^2(s)}{s\sqrt{1-(u')^2(s)}}\,ds
            = F(\bar\xi)-F(u(\bar R)),\label{eq:int1}\\
            &H(u'(\rho_0))+(N-1)\int_0^{\rho_0}\frac{(u')^2(s)}{s\sqrt{1-(u')^2(s)}}\,ds
            = F(\bar\xi).\label{eq:int2}
        \end{align}
    Subtracting \eqref{eq:int1} from \eqref{eq:int2}, we obtain
        \begin{equation*}
            H(u'(\rho_0))+(N-1)\int_{\bar R}^{\rho_0}\frac{(u')^2(s)}{s\sqrt{1-(u')^2(s)}}\,ds
            = F(u(\bar R))
        \end{equation*}
    that is $F(u(\bar R))>0.$\\ Since $u'(r)<0$ for any $r\in(0,\bar R)$, we have that
    $u''(\bar R)\ge 0$ and then from \eqref{eq:ode} it follows that $f(u(\bar R))\le 0.$
    Since $f$ is positive in $I$ and $0<u(\bar R)<\bar\xi<\beta,$ certainly $u(\bar R)\in (0,\a].$
    From this we deduce that $F(u(\bar R))<0$ and then the contradiction.
    \end{proof}

    \begin{theorem}\label{th:disop}
        $I_+$ and $I_-$ are disjoint and open.
    \end{theorem}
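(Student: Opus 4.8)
The plan is to prove disjointness first and then openness, the latter resting on continuous dependence of the shooting solution $r\mapsto(u(\xi,r),u'(\xi,r))$ on the parameter $\xi$ together with a transversality property at the first vanishing point of $u'$ (for $I_+$) or of $u$ (for $I_-$). For the disjointness, given $\xi\in I_+$ let $\bar R$ be as in \eqref{eq:barr}, the first zero of $u'(\xi,\cdot)$, and given $\xi\in I_-$ let $R_*:=\inf\{r>0\mid u(\xi,r)=0\}$ be the first zero of $u(\xi,\cdot)$; both are positive, and by definition of the two sets $u>0$ and $u'<0$ strictly on the interval preceding them. If $\xi$ lay in $I_+\cap I_-$ I would compare $\bar R$ and $R_*$: if $\bar R<R_*$ then $u'$ vanishes at $\bar R\in(0,R_*)$, contradicting $u'(\xi,\cdot)<0$ on $(0,R_*)$; if $R_*<\bar R$ then $u$ vanishes at $R_*\in(0,\bar R)$, contradicting $u(\xi,\cdot)>0$ on $(0,\bar R)$; and if $\bar R=R_*$ then $u$ and $u'$ vanish simultaneously at a point $r_0>0$. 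Writing $w:=\phi'(u')$, equation \eqref{eq:ode} becomes the first-order system $u'=(\phi')^{-1}(w)$, $w'=-\tfrac{N-1}{r}w-f(u)$, whose right-hand side is locally Lipschitz in $(u,w)$ for $r$ in compact subsets of $(0,+\infty)$; since $(u,w)\equiv(0,0)$ is a solution (recall $f(0)=0$), uniqueness forces $u\equiv0$, against $u(0)=\xi>\alpha$. Hence $I_+\cap I_-=\emptyset$.

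\emph{Transversality.} Next I would show that the relevant crossing is transversal. For $\xi\in I_-$, at $R_*$ one has $u(\xi,R_*)=0$ and necessarily $u'(\xi,R_*)<0$, since $u'(\xi,R_*)=0$ together with $u(\xi,R_*)=0$ would give $u\equiv0$ by the uniqueness just used. For $\xi\in I_+$, evaluating \eqref{eq:ode} at $\bar R$ (where $u'=0$) gives $u''(\bar R)=-f(u(\bar R))$; as $\bar R$ is approached with $u'<0$ it is a minimum, so $u''(\bar R)\ge0$ and $f(u(\bar R))\le0$. Here $u(\bar R)>0$ (again $u(\bar R)=0=u'(\bar R)$ is impossible) and $u(\bar R)<\xi<\beta$; since $f>0$ on $I=(\alpha,\beta)$, this forces $u(\bar R)\in(0,\alpha]$. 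The value $u(\bar R)=\alpha$ is excluded because $f(\alpha)=0$ makes $u\equiv\alpha$ a solution, so uniqueness would give $u\equiv\alpha\ne\xi$. Therefore $u(\bar R)\in(0,\alpha)$, where $f<0$, whence $u''(\bar R)>0$ and $u'$ crosses zero transversally.

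\emph{Openness.} Finally I would invoke continuous dependence on $\xi$, uniform on compact subsets of $[0,R)$. Near the origin I read it off the derivative formula $u'(\xi,r)=(\phi')^{-1}\!\bigl(-r^{1-N}\int_0^r t^{N-1}f(u(\xi,t))\,dt\bigr)$ coming from \eqref{eq:fixed}, which is continuous in $\xi$ on a fixed $[0,\delta]$ and, because $f(u(\xi,\cdot))>0$ there, keeps $u'(\xi,\cdot)<0$ on $(0,\delta]$ for all $\xi$ near $\xi_0$; on $[\delta,b]\subset(0,R)$ it is the standard continuous dependence for the regular system above. Taking $\xi_0\in I_-$ and $\sigma>0$ small so that $u(\xi_0,R_*-\sigma)>0>u(\xi_0,R_*+\sigma)$ (transversality), for $\xi$ close to $\xi_0$ these strict signs persist, so by the intermediate value theorem $u(\xi,\cdot)$ vanishes in $(R_*-\sigma,R_*+\sigma)$, while uniform closeness keeps $u(\xi,\cdot)>0$ and $u'(\xi,\cdot)<0$ on $(0,R_*-\sigma]$; hence its first zero lies in that window with $u,u'$ of the required signs before it, i.e. $\xi\in I_-$. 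The same scheme, using the transversal sign change of $u'$ at $\bar R$ in place of that of $u$, shows $I_+$ open.

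I expect the main obstacle to be the transversality at $\bar R$ for $I_+$, and in particular the ruling out of the degenerate case $u(\bar R)=\alpha$, together with the continuous dependence through the singular origin $r=0$, where the $\tfrac{N-1}{r}$ term prevents a direct appeal to the classical theorem and the fixed-point representation \eqref{eq:fixed} must be used instead.
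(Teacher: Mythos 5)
Your proof is correct and its skeleton coincides with the paper's: for disjointness the paper offers exactly your argument as its ``alternative (and simpler) proof'' (a common vanishing point of $u$ and $u'$ forces $u\equiv 0$ by uniqueness for the regularized first-order system away from $r=0$, against $u(0)=\xi>\alpha$), its primary argument being instead an appeal to the strong maximum principle of Pucci--Serrin--Zou for compactly supported solutions. Where you genuinely go beyond the paper is in the openness part: the paper disposes of it in one sentence (``by continuous dependence on the initial datum''), whereas you supply the transversality analysis that makes that sentence actually work --- in particular the observation that $u(\bar R)\in(0,\alpha]$, and the exclusion of the degenerate value $u(\bar R)=\alpha$ via uniqueness against the constant solution $u\equiv\alpha$ (using $f(\alpha)=0$), so that $u''(\bar R)=-f(u(\bar R))>0$ and the zero of $u'$ is simple; likewise $u'(R_*)<0$ at the first zero of $u$. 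This is a real gap-filling, since without transversality a tangential zero could disappear under perturbation of $\xi$ and openness would not follow from continuous dependence alone. Your treatment of the singular point $r=0$ through the integral representation $\phi'(u'(\xi,r))=-r^{1-N}\int_0^r t^{N-1}f(u(\xi,t))\,dt$ is also the right way to get uniform control of the sign of $u'$ near the origin. Two cosmetic remarks: your use of the symbol $\xi_0$ for a generic point of $I_\pm$ clashes with the paper's definition \eqref{eq:xiz} of $\xi_0$ as $\inf\{\xi>0\mid F(\xi)>0\}$, and you should state explicitly that continuous dependence on compact subintervals of the maximal interval of existence is legitimate here because \eqref{eq:bound} keeps $|u'|\le 1-\eps$, so the solution neither blows up nor reaches the degeneracy $|u'|=1$ before $\bar R$ (resp.\ $R_*$).
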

    \begin{proof}
        By contradiction, suppose $\bar \xi\in I_+\cap I_-.$ Then,
        since the solution of \eqref{cauchy} with $\xi=\bar \xi$ is
        such that $u(R')=u'(R')=0,$ we can extend it by $0$ in $(R',+\infty)$ and we get a compact support
        solution to the equation \eqref{eq:ode}.  Simple computations
        shows that this contradicts the strong maximum principle as it
        appears in \cite[Theorem 1]{PSZ} (actually this theorem concerns a different class of operators,
        but the proof works also in our case),
        since $u(|x|)$ would be a compact support solution to the
        equation
        in
        \eqref{eq}.\\
        An alternative (and simpler) proof consists in observing
        that, by uniqueness theorem, $u=0$ is the unique solution of
        the Cauchy problem
            \begin{equation*}
                \left\{
                    \begin{array}{ll}
                        \left(\frac{u'}{\sqrt{1-(u')^2}}\right)'+\frac{N-1}{r}\frac{u'}{\sqrt{1-(u')^2}}+f(u)=0
                        \\
                        u(R')=0, u'(R') =0.
                    \end{array}
                \right.
            \end{equation*}
        Finally, observe that, by continuous dependence on the initial datum, $I_+$
        and $I_-$ are open sets.
    \end{proof}

    By Theorem \ref{th:nonemp1}, \ref{th:nonemp2} and
    \ref{th:disop}, we can take $\xi\in I\setminus(I_+\cup I_-).$ Since
    $\bar R=+\infty,$ by Remark \ref{rem} $u(\xi,r)$ is defined in $\R_+.$
    By Lemma \ref{le:lim} $\lim_{r\to+\infty}u(\xi,r)=0.$ As a
    consequence $\bar u(x)=u(\xi,|x|)$ is a solution of \eqref{eq}.

\end{document}